\documentclass[11pt]{amsart}
\usepackage{float}
\usepackage[usenames]{color}
\usepackage{graphicx}
\usepackage{amssymb}
\usepackage{amscd}
\usepackage{makecell}
\hfuzz = 10pt

\usepackage{tikz}
\usetikzlibrary{graphs}
\usetikzlibrary{arrows,decorations.markings,plotmarks,decorations.markings}
\usetikzlibrary{positioning}	
\usetikzlibrary{snakes} 

\usepackage{enumitem}


\addtolength{\textheight}{.1\topmargin}
\addtolength{\textwidth}{.7\oddsidemargin}
\addtolength{\textwidth}{.7\evensidemargin}
\setlength{\topmargin}{.1\topmargin}
\setlength{\oddsidemargin}{.3\oddsidemargin}
\setlength{\evensidemargin}{.3\evensidemargin}

\parskip = 1.5pt

\setcounter{secnumdepth}{3}

\theoremstyle{plain}
\newtheorem{thm}{Theorem}[section]
\newtheorem{lemma}[thm]{Lemma}
\newtheorem{cor}[thm]{Corollary}

\theoremstyle{definition}
\newtheorem{defi}[thm]{Definition}

\newtheorem{example}[thm]{Example}

\newcommand{\bbH}{\mathbb{H}}

\newcommand{\bbR}{\mathbb{R}}


\DeclareMathOperator{\Ad}{Ad}

\DeclareMathOperator{\spin}{spin}
\begin{document}

\title{Spin norm and lambda norm}

\author{Chao-Ping Dong}
\address[Dong]{School of Mathematical Sciences, Soochow University, Suzhou 215006,
	P.~R.~China}
\email{chaopindong@163.com}

\author{Du Chengyu}
\address[Du]{School of Mathematical Sciences, Soochow University, Suzhou 215006,
	P.~R.~China}
\email{cydu0973@suda.edu.cn}

\abstract{Given a $K$-type $\pi$, it is known that its spin norm (due to first-named author) is lower bounded by its lambda norm (due to Vogan). That is, $\|\pi\|_{\rm spin}\geq \|\pi\|_{\rm lambda}$. This note aims to describe for which $\pi$ one can actually have equality. We apply the result to tempered Dirac series. In the case of real groups, we obtain that the tempered Dirac series are divided into $\#W^1$ parts among all tempered modules with real infinitesimal characters.}

\endabstract

\subjclass[2010]{Primary 22E46}

\keywords{lambda norm, spin norm, tempered representations}

\maketitle


\newcommand{\td}{\mathrm{d}}
\newcommand{\C}{\mathrm{C}}
\newcommand{\e}{\mathrm{e}}
\newcommand{\id}{\mathrm{id}}
\newcommand{\coker}{\mathrm{coker}}
\newcommand{\im}{\mathrm{im~}}
\newcommand{\Hom}{\mathrm{Hom}}

\newcommand{\FF}{\mathscr{F}}
\newcommand{\GG}{\mathscr{G}}
\newcommand{\oo}{\mathcal{O}}

\newcommand{\Z}{\mathbb{Z}}
\newcommand{\proj}{\mathbb{P}}
\newcommand{\cpl}{\mathbb{C}}
\newcommand{\re}{\mathbb{R}}

\newcommand{\selfb}[1]{\noindent\fbox{\parbox{\textwidth}{#1}}}

\newcommand{\homo}{\tilde{H}}
\newcommand{\homd}{H^\Delta}
\newcommand{\homcw}{H^{\mathrm{CW}}}

\newcommand{\norm}[1]{\Vert #1 \Vert}

\newcommand{\fk}{\mathfrak{k}}
\newcommand{\fg}{\mathfrak{g}}
\newcommand{\ft}{\mathfrak{t}}
\newcommand{\fs}{\mathfrak{s}}

\newcommand{\normlambda}[1]{\Vert #1 \Vert_{\rm lambda}}
\newcommand{\normspin}[1]{\Vert #1 \Vert_{\spin}}

\section{Introduction}

Let $G$ be a \emph{linear} real reductive Lie group which is in the \emph{Harish-Chandra class} \cite{HC}. That is,
\begin{enumerate}
	\item[$\bullet$] $G$ has only a finite number of connected components;
	\item[$\bullet$] The derived group $[G,G]$ has finite center;
	\item[$\bullet$] The adjoint action $\Ad(g)$ of any $g\in G$ is an inner automorphism of $\fg=(\fg_0)_\cpl$, where $\fg_0$ is the Lie algebra of $G$.
\end{enumerate}

Let $\theta$ be a Cartan involution of $G$. We assume the subgroup $K=G^\theta$ of fixed points of $\theta$ is a maximal compact subgroup of $G$. Let $\fg_0=\fk_0\oplus\fs_0$ be the corresponding Cartan decomposition of $\fg_0$. We drop the subscript for the complexification.

Let $\hat{G}^{\rm temp,o}$ denote the set of irreducible tempered representations with real infinitesimal character (up to equivalence). Let $\hat{K}$ denote the set of $K$-types.
The following bijection was noted by Trapa \cite{T}, after Vogan's paper \cite{V84}.

\begin{thm}\label{backgroundthm}
	Let $X$ be any irreducible tempered $(\fg,K)$-module with real infinitesimal character. Then $X$ has a unique lowest $K$-type which occurs with multiplicity one. Moreover, the map
	$$
	\phi: \hat{G}^{\rm temp,o}\to\hat K
	$$
	defined by taking the lowest $K$-type, is a well-defined bijection.
\end{thm}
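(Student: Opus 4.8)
The plan is to derive the theorem from Vogan's classification of irreducible tempered $(\fg,K)$-modules by their lowest $K$-types \cite{V84}, together with one observation peculiar to the real-infinitesimal-character situation. Recall that any irreducible tempered $X$ is a constituent of a real parabolically induced module $\Ind_{MAN}^{G}(\delta\otimes\nu\otimes\mathbf{1})$, where $MAN$ is a cuspidal parabolic subgroup, $\delta$ is a (limit of) discrete series of $M$, and $\nu$ is a unitary (purely imaginary) character of $A$. The infinitesimal character of this module is $(\lambda_\delta,\nu)$, with $\lambda_\delta$ the real Harish-Chandra parameter of $\delta$; requiring it to be real forces $\nu=0$. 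So the first step is to record: \emph{an irreducible tempered module has real infinitesimal character precisely when it is a constituent of $\Ind_{MAN}^{G}(\delta\otimes\mathbf{1})$ for some cuspidal $MAN$ and some (limit of) discrete series $\delta$ of $M$ with real infinitesimal character} --- the (limits of) discrete series of $G$ itself being the case $A=\{1\}$.

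Next I would settle the uniqueness of the lowest $K$-type and its multiplicity. By Vogan's analysis, the lowest $K$-types of $\Ind_{MAN}^{G}(\delta\otimes\nu)$ form a single orbit under the relevant $R$-group, each with multiplicity one, and when $\nu=0$ the irreducible constituents of the induced module are matched bijectively with the characters of that $R$-group so that distinct constituents carry distinct lowest $K$-types. Hence each irreducible constituent --- i.e.\ each $X$ as above --- has exactly one lowest $K$-type, occurring once. This is precisely the place where $\nu=0$ is essential: for $\nu\neq0$ an irreducible induced module can have several lowest $K$-types, already for the non-spherical unitary principal series of $SL(2,\bbR)$.

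For injectivity of $\phi$, suppose $X_1,X_2\in\hat G^{\rm temp,o}$ have the same lowest $K$-type $\mu$. Vogan's lowest-$K$-type algorithm reconstructs from $\mu$ --- through the ``most noncompact'' part of its highest weight and the resulting $\normlambda{\mu}$-minimizing datum --- a cuspidal parabolic $MAN$ and a (limit of) discrete series $\delta$ of $M$, all up to $K$-conjugacy, with $\delta$ being the unique such representation of $M$ whose lowest $K$-type matches. Thus $X_1$ and $X_2$ are constituents of one and the same $\Ind_{MAN}^{G}(\delta\otimes\mathbf{1})$, and by the matching of the previous step they coincide.

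It remains to prove surjectivity, which I expect to be the main obstacle. Given $\mu\in\hat K$ with highest weight $\lambda(\mu)$, I would run Vogan's construction in reverse: use $\lambda(\mu)$, shifted by $\rho_{\fk}-\rho$ in the standard way, to cut out a cuspidal datum $(MAN,\delta)$, verify that the induced Harish-Chandra parameter on $M$ is real and limit-regular so that $\delta$ genuinely exists, and then take the constituent of $\Ind_{MAN}^{G}(\delta\otimes\mathbf{1})$ whose lowest $K$-type is $\mu$. Temperedness holds because $\nu=0$, real infinitesimal character follows from the first step, and $\phi$ of this constituent is $\mu$ by construction. The delicate point is the verification that an \emph{arbitrary} $K$-type integrates to valid discrete-series data on the appropriate Levi --- the integrality and (limit-)regularity conditions of Vogan's algorithm --- and that $\mu$ is indeed the lowest $K$-type of the chosen constituent with multiplicity one; this rests on the fine structure of $\hat K$ and the properties of $\normlambda{\cdot}$ developed in \cite{V84}.
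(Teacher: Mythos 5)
The paper does not actually prove this theorem: it is imported from Trapa's notes \cite{T}, which package Vogan's results \cite{V84}, so there is no in-paper argument to compare yours against. Your outline is the standard proof lying behind those citations: temperedness places $X$ inside $\Ind_{MAN}^{G}(\delta\otimes\nu\otimes\mathbf{1})$ with $\delta$ a (limit of) discrete series and $\nu$ unitary, reality of the infinitesimal character forces $\nu=0$; at $\nu=0$ the lowest $K$-types of the induced module form a single $R$-group orbit, each with multiplicity one and exactly one per irreducible summand, which gives uniqueness and multiplicity one of the lowest $K$-type of $X$; and the lowest-$K$-type algorithm recovers the cuspidal datum $(MA,\delta)$ from the $K$-type, which gives injectivity of $\phi$. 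All of this is correct in outline, and your $SL(2,\bbR)$ remark about why $\nu=0$ matters is apt.

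The one place where your write-up is not yet a proof is surjectivity, and you flag this yourself. Showing that an \emph{arbitrary} $\mu\in\hat K$ gives rise to valid (limit of) discrete series data on the Levi produced by the algorithm --- the integrality and nondegeneracy checks, and the verification that $\mu$ is then the unique lowest $K$-type of the chosen constituent --- is precisely the substantive content of Vogan's parametrization of $\hat K$ by lowest $K$-type data; it is asserted, not carried out. The same is true of the $\nu=0$ matching of constituents with characters of the $R$-group, which you quote as a black box. So what you have is a correct reduction of the theorem to the results of \cite{V84} (essentially reproducing how \cite{T} presents it), rather than an independent proof. Since the paper itself treats the statement at exactly that citation level, this is a reasonable resolution, but no step that is nontrivial in \cite{V84} has been reproved here.
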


Motivated by the lambda norm introduced by Vogan \cite{V81}, the first-named author introduced spin norm  \cite{D13} for the classification of \emph{Dirac series} (i.e., irreducible unitary representations of $G$ with non-vanishing Dirac cohomology). We refer the reader to \cite{HP} and references therein for the notion of Dirac cohomology.  It was proven in \cite{D13} that the spin norm of a $K$-type $\pi$ is bounded below by its lambda norm. That is,
\begin{equation}\label{the-ineq}
\norm{\pi}_{\spin}\geqslant\norm{\pi}_{\rm lambda}.
\end{equation}
This inequality turns out to have a nice interpretation in the setting of Theorem \ref{backgroundthm}. Indeed, let $\hat{G}^{\rm temp,d}$ collect the members of $\hat{G}^{\rm temp, o}$ with non-zero Dirac cohomology. Put
\begin{equation*}
	\hat K_e:=\{ \pi\in \hat K |~
	\norm{\pi}_{\spin}=\norm{\pi}_{\rm lambda}
	\}.
\end{equation*}

\begin{thm} \emph{(\cite{DD})} \label{baseThm}
The map $\phi$ restricts to $\hat{G}^{\rm temp,d}$ is a bijection onto $\hat K_e$. More precisely, any member
$\pi\in\hat{G}^{\rm temp,o}$ is a Dirac series if and only if the inequality \eqref{the-ineq} becomes an equality on its unique lowest $K$-type.
\end{thm}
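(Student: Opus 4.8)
We sketch a proof of Theorem~\ref{baseThm}; the plan is to reduce it, via Theorem~\ref{backgroundthm} and two classical facts, to a single equivalence relating the Dirac cohomology of $X$ to its lowest $K$-type, and then to establish that equivalence. By Theorem~\ref{backgroundthm} the map $\phi$ is already a bijection, and each $X\in\hat{G}^{\rm temp,o}$ carries a unique lowest $K$-type $E_\mu$ with $\mu=\phi(X)$, appearing with multiplicity one; so the theorem amounts to the statement that, for each such $X$ with infinitesimal character $\Lambda$ (taken $\fg$-dominant, hence real),
\[
H_D(X)\neq 0\quad\Longleftrightarrow\quad\normspin{\mu}=\normlambda{\mu}.
\]
The two inputs are: first, Vogan's characterization \cite{V81} of tempered modules with real infinitesimal character through the lambda norm --- for any irreducible $(\fg,K)$-module with lowest $K$-type $E_\mu$ one has $\norm{\Lambda}\geqslant\normlambda{\mu}$, with equality precisely when the module is tempered with real infinitesimal character, so for our $X$ one gets $\normlambda{\mu}=\norm{\Lambda}$; second, Parthasarathy's Dirac inequality --- since $X$ is tempered, hence unitary, on the $E_\gamma$-isotypic component of $X\otimes S$ (for a $\widetilde K$-type $E_\gamma$ with highest weight $\gamma$) the square $D^2$ of the Dirac operator acts by the scalar $\norm{\gamma+\rho_c}^2-\norm{\Lambda}^2$, which is $\geqslant 0$.

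From the second input one reads off the needed dictionary. Unitarity forces $\ker D=\ker D^2$ and $\operatorname{im}D\cap\ker D=0$, so $H_D(X)$ equals the sum of the isotypic components of $X\otimes S$ on which $D^2$ vanishes, i.e.
\[
H_D(X)=\bigoplus_{\norm{\gamma+\rho_c}=\norm{\Lambda}}(X\otimes S)_{E_\gamma}.
\]
Since $E_\mu\hookrightarrow X$, every $\widetilde K$-type of $E_\mu\otimes S$ occurs in $X\otimes S$; hence the definition of the spin norm, $\normspin{\mu}=\min\{\norm{\gamma+\rho_c}:E_\gamma\subseteq E_\mu\otimes S\}$ (see \cite{D13}), gives $\normspin{\mu}\geqslant\norm{\Lambda}$ --- which, with the first input, re-proves \eqref{the-ineq} on $\mu$ --- together with
\[
\normspin{\mu}=\norm{\Lambda}\iff E_\mu\otimes S\ \text{contains a}\ \widetilde K\text{-type that occurs in}\ H_D(X).
\]
So the theorem is reduced to a single equivalence: $H_D(X)\neq 0$ if and only if some $\widetilde K$-type of $H_D(X)$ already occurs in $E_\mu\otimes S$, where $E_\mu$ is the lowest $K$-type of $X$. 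The implication $\Leftarrow$ is trivial; and once the equivalence is known for every $X$, the ``more precisely'' assertion follows immediately, while the bijectivity of $\phi$ then forces $\phi$ to restrict to a bijection of $\hat{G}^{\rm temp,d}$ onto $\hat{K}_e$.

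The implication $\Rightarrow$ is the heart of the matter, and I expect it to be the main obstacle: one must show that a tempered module with real infinitesimal character and nonzero Dirac cohomology has its Dirac cohomology already carried by its lowest $K$-type tensored with $S$. One clean route would proceed through a monotonicity property of the spin norm. Suppose one knows that among the $K$-types of such an $X$ the lowest $K$-type minimizes $\normspin{\cdot}$. If $H_D(X)\neq 0$, pick $E_\gamma\subseteq H_D(X)$ and a $K$-type $E_\tau\subseteq X$ with $E_\gamma\subseteq E_\tau\otimes S$; then $\normspin{\tau}\leqslant\norm{\gamma+\rho_c}=\norm{\Lambda}$, whereas $\normspin{\tau}\geqslant\norm{\Lambda}$ by Parthasarathy, so $\norm{\Lambda}\leqslant\normspin{\mu}\leqslant\normspin{\tau}=\norm{\Lambda}$ by minimality and \eqref{the-ineq}; thus $\normspin{\mu}=\norm{\Lambda}=\normlambda{\mu}$, as wanted. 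The task therefore reduces to this monotonicity statement, which I would try to extract from the structure of the spin norm (cf.\ \cite{D13}).

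A more hands-on alternative is to compute $H_D(X)$ directly from the description of $\hat{G}^{\rm temp,o}$ behind Theorem~\ref{backgroundthm}: by Trapa \cite{T} (after Vogan \cite{V84}), $X$ is a summand of $\Ind_{MAN}^{G}(D^M)$ with vanishing continuous parameter, $MAN$ a cuspidal parabolic (so $M$ of equal rank), $D^M$ a discrete series or limit of discrete series of $M$, and $E_\mu$ produced from the Harish-Chandra parameter of $D^M$ by the Vogan--Zuckerman/Blattner recipe. Writing $\fs=\fs^M\oplus\mathfrak a\oplus\widetilde{\mathfrak n}$ with $\widetilde{\mathfrak n}\cong\mathfrak n$ under $Z\mapsto Z-\theta Z$, so that $S=S(\fs^M)\otimes S(\mathfrak a\oplus\widetilde{\mathfrak n})$, one would identify the $\widetilde K$-types of $H_D(X)$ in terms of $H_D^{M}(D^M)$ and the top part of $S(\widetilde{\mathfrak n})$ and check that the resulting $\widetilde K$-type --- obtained by pairing the lowest $(M\cap K)$-type datum of $D^M$ with the weight $\rho(\widetilde{\mathfrak n})$ --- occurs in $E_\mu\otimes S$, which is exactly how $\mu$ was built. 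The delicate points here --- and the reason I flag this as the hard step --- are the bookkeeping of the several $\rho$-shifts relating $\mathfrak m$, $\fk$ and $\fg$, and the control of the $S(\mathfrak a)$-factor, where the vanishing of the continuous parameter must be used carefully to conclude in full generality.
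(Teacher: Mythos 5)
The paper itself offers no argument for Theorem \ref{baseThm}: it is quoted from Ding--Dong \cite{DD}, so there is no in-paper proof to compare with. Your framework is nevertheless the standard (and correct) skeleton: for tempered $X$ with real infinitesimal character $\Lambda$ and lowest $K$-type $\mu$ one has $\normlambda{\mu}=\norm{\Lambda}$ (Vogan/Carmona), while unitarity gives $H_D(X)=\ker D=\ker D^2$ and $D^2$ acts on the $E_\gamma$-isotypic part of $X\otimes S_G$ by $\norm{\gamma+\rho_c}^2-\norm{\Lambda}^2$, so $H_D(X)\neq 0$ exactly when some $\widetilde K$-type of $X\otimes S_G$ attains $\norm{\gamma+\rho_c}=\norm{\Lambda}$. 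With these inputs your ``$\Leftarrow$'' direction is indeed complete, and the translation of the theorem into the single equivalence you state is accurate.

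The genuine gap is the ``$\Rightarrow$'' direction, which you flag but do not prove, and your first proposed route does not actually reduce it. Since Parthasarathy already gives $\normspin{\tau}\geqslant\norm{\Lambda}$ for \emph{every} $K$-type $\tau$ of $X$, the ``monotonicity'' you ask for (the lowest $K$-type attains the minimal spin norm when $H_D(X)\neq 0$) is literally the assertion $\normspin{\mu}=\norm{\Lambda}=\normlambda{\mu}$, i.e.\ it is a restatement of the hard half of the theorem rather than an independent lemma one could ``extract from the structure of the spin norm''; nothing is gained by this reformulation. Your second route---realizing $X$ inside $\Ind_{MAN}^G(D^M)$ with zero continuous parameter, decomposing $S_G$ along $\fs=\fs^M\oplus\mathfrak{a}\oplus\widetilde{\mathfrak{n}}$, and matching the $\widetilde K$-types of $H_D(X)$ (built from the Harish-Chandra parameter of $D^M$ and $\rho(\widetilde{\mathfrak{n}})$) against $\mu\otimes S_G$ with $\mu$ given by the Vogan/Blattner recipe---is the one that can be made to work and is in the spirit of \cite{DD}, but as written it is only an outline: the nonvanishing criterion for Dirac cohomology of limits of discrete series, the $\rho$-shift bookkeeping between $\mathfrak{m}$, $\fk$ and $\fg$, and the treatment of the $\mathfrak{a}$-factor are exactly where the content of the theorem lies, and none of it is carried out. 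So the proposal, as it stands, proves only the easy implication.
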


Given an arbitrary $K$-type $\pi$, it is not easy to compute neither $\norm{\pi}_{\rm lambda}$ nor $\norm{\pi}_{\spin}$. Thus, it is subtle to detect whether the inequality \eqref{the-ineq} is strict or not.
This note aims to give a criterion on this aspect. Our main result is Theorem \ref{MainThm}. The main idea is to insert an intermediate value between $\norm{\pi}_{\rm lambda}$ and $\norm{\pi}_{\spin}$. As an application, our result suggests that the tempered Dirac series should be separated into $\#W^1$ parts. See \eqref{def-W1} for the definition of $W^1$.

The note is outlined as follows: In Section 2, we recall lambda norm and spin norm. Then we deduce our main result in Section 3. The last section considers tempered Dirac series.

\section{Preliminaries}

In this section, we briefly recall the definitions of the spin norm and the lambda norm.

\subsection{The lambda norm}
We keep the notations $K,G,\fk,\fs,\theta$, etc as in the previous section. Let $T$ be a maximal torus of $K$ and $\ft_0$ be the Lie algebra of $T$. Recall that the analytic Weyl group is defined by
$$W(\fk,\ft)=N_K(T)/A_K(T).$$
It acts on the root system $\Delta(\fk,\ft)$. Fix a choice of positive roots $\Delta^+(\fk,\ft)$, and define
$$R(G):=\{ r\in W(\fk,\ft) | r\Delta^+(\fk,\ft)=\Delta^+(\fk,\ft) \}.$$

Given a $K$-type $\pi$, by Lemma 0.1 of \cite{SRV}, the collection of highest weights of $\pi$ as $\fk$-module is a single orbit of $R(G)$ on $\hat T\in i\ft^\ast_0$, where $\hat T$ is the abelian group of characters of $T$.

Now given any $K$-type $\pi$, take a highest weight $\mu$ of it. Then $\mu\in i\ft^\ast_0$ is dominant integral for $\Delta^+(\fk,\ft)$. Denote by $\rho_c$ the half sum of all roots in $\Delta^+(\fk,\ft)$. Choose a positive root system $\Delta^+(\fg,\ft)$ making $\mu+2\rho_c$ dominant. Denote by $\rho$ the half sum of all roots in $\Delta^+(\fg,\ft)$. Let $P$ be the projection map to the dominant chamber $C(\fg)$ corresponding to $\Delta^+(\fg,\ft)$. Then $\norm{P(\mu+2\rho_c-\rho)}$ is independent of the choices of $\mu$ and $\Delta^+(\fg,\ft)$, cf. Section 1 and Corrollary 2.4 of \cite{SRV}. Now we are ready to talk about the lambda norm.

\begin{defi} (\rm\cite{V81,Ca})
	For any $\pi\in\hat K$, the lambda norm of $\pi$ is defined to be
	\begin{equation}
		\normlambda{\pi}:=\norm{P(\mu+2\rho_c-\rho)},
	\end{equation}
	where $\mu$ is any highest weight of $\pi$. For any irreducible admissible $(\fg,K)$-module $X$, the lambda norm of $X$ is defined to be
	\begin{equation}
		\normlambda{X}:=\min_{\pi} \normlambda{\pi},
	\end{equation}
	where $\pi$ runs over all the $K$-types occurring in $X$. A $K$-type $\pi$ is called a lowest $K$-type of $X$ if it occurs in $X$ and $\normlambda{\pi}=\normlambda{X}$.
\end{defi}

\subsection{The spin norm}

Although the original definition of the spin norm involves the spin module $S_G$ of the Clifford algebra $C(\fs)$, our discussion here does not need a deep understanding of it. Our tool is mainly the root systems and their Weyl groups.

\begin{defi} (\rm\cite{D13})
	For any $\pi\in \hat K$, its spin norm is defined to be
	$$
	\normspin{\pi}:=\min \norm{\gamma+\rho_c},
	$$
	where $\gamma$ runs over all the highest weights of the $\tilde K$-types in $\pi\otimes S_G$. For any irreducible admissible $(\fg,K)$-module $X$, its spin norm is defined to be
	$$
	\normspin{X}:=\min_{\pi} \normspin{\pi},
	$$
	where $\pi$ runs over all the $K$-types occurring in $X$. We call $\pi$ a spin lowest $K$-type of $X$ if it occurs in $X$ and $\normspin{\pi}=\normspin{X}$.
\end{defi}

\section{When is the inequality \eqref{the-ineq} strict?}

 We fix a positive root system $\Delta^+(\fk,\ft)$, and denote the half sum of roots in it by $\rho_c$. Let $W(\fg,\ft)$ (resp., $W(\fk,\ft))$) be the Weyl group of $\Delta(\fg,\ft)$ (resp., $\Delta(\fk,\ft)$). Let $C(\fk)$ be the closed dominant Weyl chamber for $\Delta^+(\fk, \ft)$. For any $\mu\in\ft^*$, we use $\{\mu\}$ to denote the unique weight in $C(\fk)$ to which $\mu$ is conjugate under the action of $W(\fk, \ft)$. Let $\Delta^+(\fg, \ft)$ be a positive root system of $\Delta(\fg, \ft)$ containing $\Delta^+(\fk, \ft)$.

\begin{lemma}\emph{(\cite[Lemma 3.5]{D13})}\label{lemmaD}
	For any $K$-type $\pi$ with a highest weight $\mu\in\ft^\ast$, we have
	\begin{equation}
		\norm{\mu}_{\spin}=\min_{w\in W^1}
		\norm{\{\mu-w\rho+\rho_c\}+\rho_c},
	\end{equation}
	where
	\begin{equation}\label{def-W1}
		W^1:=\{w\in W(\fg,\ft)| wC(\fg)\subseteq C(\fk) \}.
	\end{equation}
\end{lemma}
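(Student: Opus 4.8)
The plan is to unwind the definition of the spin norm directly and reduce everything to a computation with weights of $S_G$ and the Weyl group $W(\fg,\ft)$. First I would recall that, as a $\tilde K$-module, $S_G$ decomposes so that its extreme weights are precisely the $w$-conjugates of $\rho$ restricted to $\ft$, i.e. the weights of $S_G$ include $\{w\rho \mid w \in W(\fg,\ft)\}$ with $\rho$ the half sum of $\Delta^+(\fg,\ft)$; more precisely, after the identification $\fs \cong \fg/\fk$ and the standard decomposition of the spin module, the highest weights (with respect to $\Delta^+(\fk,\ft)$) of the $\tilde K$-types appearing in $S_G$ are exactly the weights $\{w\rho - \rho_c + \text{(lower terms)}\}$... but the cleanest route is to use the known fact (Parthasarathy; see \cite{HP}) that every $\tilde K$-type of $S_G$ has a highest weight of the form $\{\sigma \rho\} - \rho_c$ for suitable $\sigma$, or to quote directly the decomposition used in \cite{D13}. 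Then for a $K$-type $\pi$ with highest weight $\mu$, the $\tilde K$-types of $\pi \otimes S_G$ have highest weights among $\{\mu + \nu\}$ where $\nu$ runs over weights of $S_G$, and the minimum of $\norm{\gamma + \rho_c}$ over these is what computes $\normspin{\pi}$.

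The key step is then to argue that, among all such $\gamma$, the minimizing one is always obtained from an \emph{extreme} weight of $S_G$ of the form $w\rho - \rho_c$ with $w$ chosen so that the resulting weight, after applying $\{\cdot\}$, lands in the dominant chamber optimally; and moreover that one may restrict $w$ to lie in $W^1$ as defined in \eqref{def-W1}. The reason $W^1$ suffices is that $W(\fg,\ft) = W^1 \cdot W(\fk,\ft)$ (coset representatives for the chambers inside $C(\fk)$), and applying $\{\cdot\}$ — i.e. conjugating back into $C(\fk)$ — absorbs the $W(\fk,\ft)$ factor without changing the norm, since $\norm{\cdot}$ is $W(\fk,\ft)$-invariant and $\rho_c$ is fixed by nothing but the norm of $\{\cdot\} + \rho_c$ only depends on the $W(\fk,\ft)$-orbit. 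Thus I would write $\gamma + \rho_c = \{\mu - w\rho + \rho_c\} + \rho_c$ after tracking how the tensor product's highest weight combines $\mu$ with the spin-module weight $-w\rho$ shifted by $\rho_c$, and take the minimum over $w \in W^1$.

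The main obstacle — and the step that needs genuine care rather than bookkeeping — is establishing that the minimum is \emph{attained} at an extreme weight $w\rho$ of $S_G$ and that no interior weight of $S_G$, nor a non-extreme $\tilde K$-type highest weight, can do better. This is where a convexity/monotonicity argument is needed: one uses that $\norm{\{x\} + \rho_c}^2$, as a function of $x$, is minimized over a $W(\fg,\ft)$-orbit-translate at the point closest to $-\rho_c$ in an appropriate sense, combined with the fact that the weights of $S_G$ lie in the convex hull of $\{w\rho\}$ and that translating $\mu$ (a dominant weight) by such weights and then projecting preserves the relevant inequalities. I expect this to follow from the same lemmas in \cite{SRV} and \cite{D13} that make the lambda norm well-defined, so once those are invoked the remaining verification is routine. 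I would close by noting the formula is independent of the choice of highest weight $\mu$ of $\pi$, since different choices are related by $R(G) \subseteq W(\fk,\ft)$ and both $\{\cdot\}$ and the norm are insensitive to that.
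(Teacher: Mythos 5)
Note first that the paper does not prove Lemma \ref{lemmaD}: it is quoted from \cite[Lemma 3.5]{D13}, so your attempt must stand on its own, and in broad strokes your outline does follow the known strategy (decompose $S_G$ into $\tilde K$-types with extreme weights of the form $w\rho-\rho_c$, identify the candidate minimizers $\{\mu-w\rho+\rho_c\}$ as PRV-type components, and use $W(\fk,\ft)$-invariance of the norm to index by $W^1$). But as a proof it has two load-bearing gaps. The first is the direction $\normspin{\pi}\leqslant\min_{w\in W^1}\norm{\{\mu-w\rho+\rho_c\}+\rho_c}$: for this you must show that each candidate weight actually \emph{occurs} as the highest weight of a $\tilde K$-type of $\pi\otimes S_G$. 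Your remark that constituent highest weights lie ``among $\{\mu+\nu\}$, $\nu$ a weight of $S_G$'' only constrains what may occur and gives no occurrence. What is needed is (i) the precise Kostant-type decomposition of $S_G$, whose $\tilde K$-highest weights are exactly $w\rho-\rho_c$ for $w\in W^1$ (with multiplicities in the unequal-rank case), rather than the vague ``$\{\sigma\rho\}-\rho_c$ for suitable $\sigma$''; and (ii) the Parthasarathy--Ranga Rao--Varadarajan theorem applied to $V_\mu\otimes V_{w\rho-\rho_c}$, whose lowest weight is $w_0^{\fk}(w\rho-\rho_c)=w_0^{\fk}w\rho+\rho_c$ (here $w_0^{\fk}$ is the longest element of $W(\fk,\ft)$, so $w_0^{\fk}\rho_c=-\rho_c$), together with the bookkeeping check that as $w$ runs over $W^1$ these lowest weights are precisely $-w'\rho+\rho_c$ with $w'$ again in $W^1$. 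You never invoke PRV, so attainment of the claimed minimum is unproved.

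The second gap is the reverse inequality, which you yourself flag as ``the main obstacle'' and then defer to an unspecified convexity/monotonicity argument and to ``the same lemmas in \cite{SRV} and \cite{D13}''. The results of \cite{SRV} you have in mind concern the projection onto $C(\fg)$ that makes the lambda norm well defined; they say nothing about $\norm{\{\mu+\nu\}+\rho_c}$ for $\nu$ a weight of $S_G$. And appealing to the decomposition ``used in \cite{D13}'' is circular, since the statement being proved \emph{is} \cite[Lemma 3.5]{D13}. A complete argument must show that for every weight $\nu$ of $S_G$ such that $\mu+\nu$ is the highest weight of a constituent of $\pi\otimes S_G$, one has $\norm{\mu+\nu+\rho_c}\geqslant\norm{\{\mu-w\rho+\rho_c\}+\rho_c}$ for some $w\in W^1$ --- for instance by expressing $\nu+\rho_c$ as $w\rho$ minus a nonnegative combination of roots positive for $w\Delta^+(\fg,\ft)$ and then comparing norms with estimates in the spirit of Lemmas \ref{DominantNorm} and \ref{LammaForLambdaAndRho}; nothing in your sketch rules out a non-extreme weight of $S_G$ doing better. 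Finally, the factorization $W(\fg,\ft)=W^1\cdot W(\fk,\ft)$ you use to reduce to $W^1$ is immediate only in the equal-rank case and needs justification when $\ft$ is not a Cartan subalgebra of $\fg$. In short, the proposal is a reasonable plan, but the two steps that constitute the actual content of the lemma are missing.
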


\begin{lemma}\emph{(\cite[\S 13.3, Lemma B]{Hum})}\label{DominantNorm}
	Let $\lambda\in C(\fk)$. Then
	$$
	\norm{\lambda+\rho_c}\geqslant\norm{w\lambda+\rho_c}
	$$
for any $w\in W(\fk, \ft)$. Moreover,	the equality holds if and only if $\lambda=w\lambda$.
\end{lemma}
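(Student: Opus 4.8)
The plan is to reduce everything to a positivity statement about inner products. Fix the $W(\fk,\ft)$-invariant inner product $(\,\cdot\,,\cdot\,)$ on $\ft^\ast$ underlying $\norm{\cdot}$. Since $w$ preserves it, $\norm{w\lambda}=\norm{\lambda}$, and expanding the squared norms gives
\begin{equation*}
\norm{\lambda+\rho_c}^2-\norm{w\lambda+\rho_c}^2=2(\lambda,\rho_c)-2(w\lambda,\rho_c)=2\bigl(\lambda,\ \rho_c-w^{-1}\rho_c\bigr),
\end{equation*}
using $(w\lambda,\rho_c)=(\lambda,w^{-1}\rho_c)$. So both the inequality and the equality case amount to analysing the sign of $(\lambda,\ \rho_c-w^{-1}\rho_c)$, and showing it vanishes exactly when $w\lambda=\lambda$.

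Next I would invoke the classical description of this difference. Writing $S(w):=\{\beta\in\Delta^+(\fk,\ft)\mid w\beta\notin\Delta^+(\fk,\ft)\}$ for the inversion set of $w$, one has $\rho_c-w^{-1}\rho_c=\sum_{\beta\in S(w)}\beta$, a sum of positive roots of $\fk$. Since $\lambda\in C(\fk)$, we have $(\lambda,\beta)\ge 0$ for every $\beta\in\Delta^+(\fk,\ft)$, hence
\begin{equation*}
\bigl(\lambda,\ \rho_c-w^{-1}\rho_c\bigr)=\sum_{\beta\in S(w)}(\lambda,\beta)\ge 0,
\end{equation*}
which proves $\norm{\lambda+\rho_c}\ge\norm{w\lambda+\rho_c}$. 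If moreover $w\lambda=\lambda$, the displayed difference of squared norms is manifestly $0$, giving the ``if'' direction of the equality claim.

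For the converse I would argue by induction on the length $\ell(w)$ in $W(\fk,\ft)$. Suppose $\norm{\lambda+\rho_c}=\norm{w\lambda+\rho_c}$; then $\sum_{\beta\in S(w)}(\lambda,\beta)=0$ and, each term being nonnegative, $(\lambda,\beta)=0$ for every $\beta\in S(w)$. If $w\ne e$, choose a simple root $\alpha$ with $w\alpha\notin\Delta^+(\fk,\ft)$; then $\alpha\in S(w)$ and $\ell(ws_\alpha)=\ell(w)-1$. From $(\lambda,\alpha)=0$ we get $s_\alpha\lambda=\lambda$, so $(ws_\alpha)\lambda=w\lambda$ and in particular $\norm{(ws_\alpha)\lambda+\rho_c}=\norm{w\lambda+\rho_c}=\norm{\lambda+\rho_c}$. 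Applying the inductive hypothesis to $w':=ws_\alpha$ yields $w'\lambda=\lambda$, whence $w\lambda=w'\lambda=\lambda$. The base case $\ell(w)=0$, i.e. $w=e$, is trivial, completing the argument.

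The only non-elementary ingredients are the identity $\rho_c-w^{-1}\rho_c=\sum_{\beta\in S(w)}\beta$ and the fact that every non-identity element of a finite Weyl group has a simple root in its inversion set (with the length dropping by exactly one under the corresponding simple reflection); these are precisely the standard facts for which \cite{Hum} is cited, so I do not anticipate any real obstacle — the argument is essentially a bookkeeping exercise once those are in hand.
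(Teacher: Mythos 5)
Your argument is correct, and it is essentially the standard proof of the cited result (Humphreys, \S 13.3, Lemma B): the paper itself gives no proof but simply cites Humphreys, whose argument — expand the squared norms, use that $\rho_c-w^{-1}\rho_c$ is a nonnegative sum of positive roots paired against the dominant $\lambda$, then handle equality by induction on length via a simple reflection fixing $\lambda$ — is exactly what you wrote, and it also parallels the technique the authors use for their own Lemma \ref{LammaForLambdaAndRho}.
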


\begin{lemma}\label{LammaForLambdaAndRho}
	Let $\Delta$ be a root system with Weyl group $W$. Fix a positive set $\Delta^+$ of roots and denote by $\rho$ the half sum of all positive roots. For any dominant weight $\lambda$, we have the following inequality
\begin{equation}\label{LemmaOfNorms}
	\norm{\lambda-\rho}\leqslant\norm{\lambda-w\rho},~\forall w\in W.
\end{equation}
	Moreover, if $\lambda$ is dominant with respect to $w \Delta^+$, we have
	$$\norm{\lambda-\rho}=\norm{\lambda-w\rho}$$
	Otherwise, the inequality \eqref{LemmaOfNorms} is strict.
	\begin{proof}
We first prove the inequality. Compute the following difference
\begin{equation}\tag{$\ast$}
\norm{\lambda-\rho}^2-\norm{\lambda-w\rho}^2=-2(\lambda,\rho-w\rho).
\end{equation}
A widely known fact is that $\rho-w\rho$ is a sum of positive roots. The weight $\lambda$ is dominant by assumption. Thus the pairing $(\lambda,\rho-w\rho)$ is non-negative, and \eqref{LemmaOfNorms} follows.

Now suppose $\lambda$ is dominant with respect to $w \Delta^+$. Notice that the half sum of positive roots with respect to $w\Delta^+$ is $w\rho$. Applying \eqref{LemmaOfNorms} to  $\lambda$, $w\Delta^+$ and $w\rho$ gives
$$\norm{\lambda-w\rho}\leqslant\norm{\lambda-w^{-1}(w\rho)}=\norm{\lambda-\rho}.$$
Therefore, \eqref{LemmaOfNorms} becomes an equality in the current setting.

Now suppose $\lambda$ is not dominant with respect to the new positive set $w \Delta^+$. Define
$$
D_w:=\{\gamma\in\Delta^- |
 \gamma\in w\Delta^+\},
$$
where $\Delta^-=-\Delta^+$. It is well-known that
$$
\rho-w\rho=\sum_{\gamma\in D_w}(-\gamma).
$$
By assumption, $\lambda$ is not dominant with respect to $w \Delta^+$. There must exist $\beta\in w\Delta^+$ such that $(\lambda,\beta)<0$. But it cannot live in $\Delta^+$, because $\lambda$ is dominant with respect to $\Delta^+$. As a consequence, $\beta\in D_w$. Continuing with ($\ast$), we have that
$$
	-(\lambda,\rho-w\rho)=
	-\left(\lambda,\sum_{\gamma\in D_w}(-\gamma)\right)
	=\left(\lambda,\sum_{\gamma\in D_w}\gamma\right)
	\leqslant(\lambda,\beta)<0.
$$
Thus \eqref{LemmaOfNorms} is strict in this case.
	\end{proof}
\end{lemma}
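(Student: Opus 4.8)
The plan is to work entirely with squared norms and the elementary geometry of $\rho$. First I would expand
\begin{equation*}
\norm{\lambda-\rho}^2-\norm{\lambda-w\rho}^2 = -2(\lambda,\rho-w\rho)+\norm{\rho}^2-\norm{w\rho}^2,
\end{equation*}
and observe that $w$ acts by an orthogonal transformation, so $\norm{w\rho}=\norm{\rho}$ and the right-hand side reduces to $-2(\lambda,\rho-w\rho)$. Then I invoke the standard description $\rho-w\rho=\sum_{\gamma\in D_w}(-\gamma)$, where $D_w=\Delta^-\cap w\Delta^+$; equivalently $\rho-w\rho$ is the sum of the positive roots that $w^{-1}$ sends to negative roots, hence a non-negative integral combination of positive roots. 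Since $\lambda$ is dominant, $(\lambda,\alpha)\geqslant 0$ for every $\alpha\in\Delta^+$, so $(\lambda,\rho-w\rho)\geqslant 0$ and the inequality \eqref{LemmaOfNorms} follows.

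For the equality claim, suppose $\lambda$ is dominant with respect to $w\Delta^+$. The half sum of the positive roots of the system $w\Delta^+$ is $w\rho$, so I may apply the inequality just established, but now with $w\Delta^+$ playing the role of $\Delta^+$ (its $\rho$ being $w\rho$) and with the Weyl element $w^{-1}$; this gives $\norm{\lambda-w\rho}\leqslant\norm{\lambda-w^{-1}(w\rho)}=\norm{\lambda-\rho}$. Combined with \eqref{LemmaOfNorms}, equality is forced.

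For the strictness claim, suppose $\lambda$ is not dominant for $w\Delta^+$, so there is $\beta\in w\Delta^+$ with $(\lambda,\beta)<0$. Because $\lambda$ is dominant for $\Delta^+$, such a $\beta$ cannot lie in $\Delta^+$; hence $\beta\in\Delta^-\cap w\Delta^+=D_w$. Then, using that $-\gamma\in\Delta^+$ for each $\gamma\in D_w$,
\begin{equation*}
(\lambda,\rho-w\rho)=\sum_{\gamma\in D_w}(\lambda,-\gamma)\geqslant(\lambda,-\beta)=-(\lambda,\beta)>0,
\end{equation*}
so the difference of squared norms is strictly negative and \eqref{LemmaOfNorms} is strict.

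I do not expect a genuine obstacle: once one has the description of $\rho-w\rho$ via the inversion set $D_w$, the whole argument is a few lines of inner-product bookkeeping. The only point I would state carefully is the legitimacy of applying the base inequality to the rotated positive system $w\Delta^+$ — this is fine, since the hypothesis "$\lambda$ dominant for $w\Delta^+$" is precisely what that application requires, and $w\rho$ is genuinely the half sum of positive roots of $w\Delta^+$.
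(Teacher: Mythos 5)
Your proposal is correct and follows essentially the same route as the paper: reducing to $-2(\lambda,\rho-w\rho)$ via orthogonality of $w$, using the inversion set $D_w$ and the fact that $\rho-w\rho$ is a sum of positive roots, applying the base inequality to the system $w\Delta^+$ for the equality case, and isolating a root $\beta\in D_w$ with $(\lambda,\beta)<0$ for strictness. No substantive difference from the paper's argument.
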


  Let us state the main result of this section.

\begin{thm}\label{MainThm}
	Let $\pi$ be an irreducible representation of $K$ with be a highest weight $\mu$. Choose a positive root system $\Delta^+(\fg,\ft)$ making $\mu+2\rho_c$ dominant.  Let $C(\fg)$ be the closed dominant Weyl chamber corresponding to $\Delta^+(\fg, \ft)$. Then the inequality \eqref{the-ineq} is strict
	if and only if one of the following conditions holds:
	\begin{enumerate}
		\item[\emph{(a)}] $\mu+2\rho_c$ is irregular for $\Delta(\fg, \ft)$.
		\item[\emph{(b)}] $\mu-w\rho+\rho_c\notin C(\fk)$ for all $w\in W(\fg, \ft)$ such that $\mu+2\rho_c\in w C(\fg)$.
	\end{enumerate}
\end{thm}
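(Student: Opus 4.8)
The plan is to trap both norms using the single auxiliary quantity $\norm{\mu+2\rho_c-\rho}$. We work throughout with the positive system $\Delta^+(\fg,\ft)$ fixed in the statement, and $\rho$ denotes its half sum of positive roots; this is legitimate because $\mu$ is dominant and $2\rho_c$ strictly dominant for $\Delta^+(\fk,\ft)$, so $\mu+2\rho_c$ is $\Delta(\fk,\ft)$-regular, and hence any positive $\fg$-system making it dominant must contain $\Delta^+(\fk,\ft)$; consequently $C(\fg)\subseteq C(\fk)$, $e\in W^1$, and Lemma \ref{lemmaD} is available for this $\Delta^+(\fg,\ft)$. With these conventions I would first establish
\begin{equation}\tag{$\star$}
\norm{\pi}_{\rm lambda}\ \leqslant\ \norm{\mu+2\rho_c-\rho}\ \leqslant\ \norm{\pi}_{\spin}.
\end{equation}
The left inequality is immediate: $\norm{\pi}_{\rm lambda}=\norm{P(\mu+2\rho_c-\rho)}$, and $P$ is the orthogonal projection onto the closed convex cone $C(\fg)$, so $\norm{v}^2=\norm{P(v)}^2+\norm{v-P(v)}^2$ for every $v$, whence $\norm{P(v)}\leqslant\norm{v}$ with equality exactly when $v\in C(\fg)$. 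For the right inequality, combine Lemma \ref{DominantNorm} in the form $\norm{\{\nu\}+\rho_c}\geqslant\norm{\nu+\rho_c}$ (equality iff $\nu\in C(\fk)$) with Lemma \ref{LammaForLambdaAndRho} applied to the dominant weight $\mu+2\rho_c$: for each $w\in W^1$,
$$
\norm{\{\mu-w\rho+\rho_c\}+\rho_c}\ \geqslant\ \norm{(\mu+2\rho_c)-w\rho}\ \geqslant\ \norm{(\mu+2\rho_c)-\rho},
$$
and minimizing over $w\in W^1$ while using Lemma \ref{lemmaD} yields the right half of $(\star)$.

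By $(\star)$, the desired equality $\norm{\pi}_{\spin}=\norm{\pi}_{\rm lambda}$ holds if and only if \emph{both} inequalities in $(\star)$ are equalities, and I would treat them separately. The left inequality of $(\star)$ is an equality precisely when $\mu+2\rho_c-\rho\in C(\fg)$, i.e. when $\langle\mu+2\rho_c,\alpha^\vee\rangle\geqslant\langle\rho,\alpha^\vee\rangle=1$ for every simple $\alpha\in\Delta^+(\fg,\ft)$; since $\mu+2\rho_c$ is $\Delta(\fg,\ft)$-dominant and integral, this is equivalent to $\langle\mu+2\rho_c,\alpha^\vee\rangle>0$ for all such $\alpha$, hence to $\mu+2\rho_c$ being regular, i.e. to the negation of (a). The easy half needs no integrality: if $\langle\mu+2\rho_c,\alpha^\vee\rangle=0$ for some simple $\alpha$ then $\langle\mu+2\rho_c-\rho,\alpha^\vee\rangle=-1$, so $\mu+2\rho_c-\rho\notin C(\fg)$ and $\norm{P(\mu+2\rho_c-\rho)}<\norm{\mu+2\rho_c-\rho}$. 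In particular, whenever (a) holds $(\star)$ is already strict, so condition (b) never has to be invoked in that case.

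It remains to handle the case where (a) fails. Then $\mu+2\rho_c$ is strictly $\Delta^+(\fg,\ft)$-dominant, so the only $w\in W(\fg,\ft)$ with $\mu+2\rho_c\in wC(\fg)$ is $w=e$, and condition (b) reduces to $\mu-\rho+\rho_c\notin C(\fk)$. To decide when the right inequality of $(\star)$ is an equality I would split the minimum of Lemma \ref{lemmaD} into the terms $w=e$ and $w\neq e$. For $w\neq e$ the vector $\mu+2\rho_c$ is not dominant for $w\Delta^+(\fg,\ft)$ (a strictly dominant vector has trivial chamber stabiliser), so the strict clause of Lemma \ref{LammaForLambdaAndRho} together with the displayed chain above gives $\norm{\{\mu-w\rho+\rho_c\}+\rho_c}>\norm{\mu+2\rho_c-\rho}$. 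For $w=e$ the same chain gives $\norm{\{\mu-\rho+\rho_c\}+\rho_c}\geqslant\norm{\mu+2\rho_c-\rho}$, and by the equality clause of Lemma \ref{DominantNorm} this is an equality if and only if $\mu-\rho+\rho_c\in C(\fk)$. Therefore the right inequality of $(\star)$ is an equality exactly when $\mu-\rho+\rho_c\in C(\fk)$, i.e. when (b) fails. Combining the two cases: $\norm{\pi}_{\spin}=\norm{\pi}_{\rm lambda}$ holds exactly when neither (a) nor (b) holds, which is the contrapositive of the assertion.

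The step I expect to need the most care is the bookkeeping around the negation of (b) when (a) also enters: when $\mu+2\rho_c$ is irregular there are several $w$ with $\mu+2\rho_c\in wC(\fg)$, and one must make sure that the hypothesis "(a) or (b)" as written in the theorem is really the complement of the condition "$\mu+2\rho_c$ regular and $\mu-\rho+\rho_c\in C(\fk)$" — which works out because in the irregular case strictness of $(\star)$ already follows from the collapse of the left inequality, so the content of (b) is only used when $w=e$ is the unique competitor. The only fact not literally contained in the lemmas quoted is the $\Delta(\fg,\ft)$-integrality of $\mu+2\rho_c$, needed to pass from $\langle\mu+2\rho_c,\alpha^\vee\rangle>0$ to $\geqslant 1$; this is standard in the present context.
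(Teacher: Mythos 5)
Your proposal is correct and follows essentially the same route as the paper: both insert the intermediate quantity $\norm{\mu+2\rho_c-\rho}$ between the two norms, tie the first inequality to condition (a) via the orthogonal projection onto $C(\fg)$ (using integrality of $\mu+2\rho_c$), and tie the second to condition (b) via Lemmas \ref{lemmaD}, \ref{DominantNorm} and \ref{LammaForLambdaAndRho} applied to $\lambda=\mu+2\rho_c$. The only organizational difference is that you case-split on (a), so that when (a) fails the set of relevant $w$ reduces to $\{e\}$, whereas the paper handles the full set $W^1(\mu)=\{w:\mu+2\rho_c\in wC(\fg)\}$ uniformly; this does not change the substance of the argument.
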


\begin{proof}
Let $P(\cdot)$ be the projection map to the cone $C(\fg)$. It suffices to show that
\begin{equation}\label{ineqs}
\norm{\pi}_{\rm lambda}=\norm {P(\mu+2\rho_c-\rho)} \leq \norm{\mu+2\rho_c-\rho}\leq \norm{\pi}_{\rm spin},
\end{equation}
that the first equality happens if and only if (a) holds, and that the second equality happens if and only if (b) holds.

By the Pythagorean theorem, the first inequality in \eqref{ineqs} holds, and it becomes an equality if and only if
$P(\mu+2\rho_c-\rho)=\mu+2\rho_c-\rho$, which is equivalent to $\mu+2\rho_c-\rho\in C(\fg)$.
 The latter is equivalent to (a) since $\mu+2\rho_c\in C(\fg)$ and $\mu+2\rho_c$ is integral.

Now let us consider the second inequality in \eqref{ineqs}. We collect all $w\in W(\fg, \ft)$ such that $\mu+2\rho_c\in wC(\fg)$ as $W^1(\mu)$.  Since $\mu+2\rho_c\in C(\fk)$, it follows that $W^1(\mu)\subseteq W^1$. Moreover, the identity element $e\in W^1(\mu)$ due to $\mu+2\rho_c\in C(\fg)$.


Using Lemma \ref{lemmaD} and \ref{DominantNorm}, we have that
	\begin{equation}\label{RoughSpinIneq}
		\normspin{\pi}=\min_{w\in W^1}
		\norm{	
			\{\mu-w\rho+\rho_c\}+\rho_c
		}\geqslant
		\min_{w\in W^1}\norm{\mu-w\rho+2\rho_c}.
	\end{equation}
Take $\lambda=\mu+2\rho_c$ and $\Delta^+=\Delta^+(\fg,\ft)$ in Lemma \ref{LammaForLambdaAndRho}. We have
\begin{equation}\label{ineq-W2}
	\norm{\mu-w\rho+2\rho_c} \geq \norm{\mu-\rho+2\rho_c}.
\end{equation}
Furthermore, the inequality \eqref{ineq-W2} is  strict when $w \notin W^1(\mu)$; yet it is an equality when $w\in W^1(\mu)$. Now the second inequality in \eqref{ineqs} follows from (\ref{RoughSpinIneq}) and \eqref{ineq-W2}.

Assume (b) holds. For any $w\in W^1\setminus W^1(\mu)$, one has that
$$
\norm{\{\mu-w\rho+\rho_c\}+\rho_c}
\geqslant\norm{\mu-w\rho+2\rho_c}
>\norm{\mu-\rho+2\rho_c}.
$$
On the other hand, for all $w\in W^1(\mu)$, one has that
$$
\norm{\{\mu-w\rho+\rho_c\}+\rho_c}
>\norm{\mu-w\rho+2\rho_c}
=\norm{\mu-\rho+2\rho_c}.
$$
The first strict inequality is due to the assumption that $\mu-w\rho+\rho_c\notin C(\fk)$ and Lemma \ref{DominantNorm}.

Assume (b) does not hold. Then there exists some $w_0\in W^1(\mu)$ such that $\mu-w_0\rho+\rho_c\in C(\fk)$.
Therefore,
$$
\norm{\{\mu-w_0\rho+\rho_c\}+\rho_c}
=\norm{\mu-w_0\rho+2\rho_c}
=\norm{\mu-\rho+2\rho_c}.
$$
Since we have proven that
$$
\displaystyle\min_{w\in W^1}
\norm{	
	\{\mu-w\rho+\rho_c\}+\rho_c}\geq \norm{\mu-\rho+2\rho_c},
$$
we must have $\normspin{\pi}=\norm{\mu-\rho+2\rho_c}.$
	
To sum up, the two inequalities  in \eqref{ineqs} are controlled by (a) and (b), respectively. Thus $\normspin{\pi}>\normlambda{\pi}$ happens if and only if at least one of (a) and (b) holds.
\end{proof}

We record an interesting corollary from the above proof.

\begin{cor}
	If $\mu+\rho_c-w\rho\in C(\fk)$ for some $w\in W^1(\mu)$. Then
	\begin{equation}
		\normspin{\mu}=\norm{\mu+2\rho_c-\rho}.
	\end{equation}
\end{cor}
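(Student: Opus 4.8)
The plan is to observe that the hypothesis of the corollary is precisely the negation of condition (b) in Theorem \ref{MainThm}, and then simply to extract the relevant chain of (in)equalities that already appears in the proof of that theorem. Write $w_0 \in W^1(\mu)$ for the element supplied by the hypothesis, so that $\mu + \rho_c - w_0\rho \in C(\fk)$; recall that $W^1(\mu)$ consists of those $w \in W(\fg,\ft)$ with $\mu + 2\rho_c \in wC(\fg)$, and that, as noted in the proof of Theorem \ref{MainThm}, $W^1(\mu) \subseteq W^1$ because $\mu + 2\rho_c \in C(\fk)$.

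First I would establish the upper bound $\normspin{\mu} \leq \norm{\mu + 2\rho_c - \rho}$. Since $\mu + \rho_c - w_0\rho$ already lies in $C(\fk)$, it is its own $W(\fk,\ft)$-dominant representative, so $\{\mu - w_0\rho + \rho_c\} = \mu - w_0\rho + \rho_c$ and hence $\norm{\{\mu - w_0\rho + \rho_c\} + \rho_c} = \norm{\mu - w_0\rho + 2\rho_c}$. Applying Lemma \ref{LammaForLambdaAndRho} with $\lambda = \mu + 2\rho_c$ and $\Delta^+ = \Delta^+(\fg,\ft)$, and using that $w_0 \in W^1(\mu)$ means $\mu + 2\rho_c$ is dominant with respect to $w_0\Delta^+(\fg,\ft)$, the "Moreover" clause of that lemma gives $\norm{\mu - w_0\rho + 2\rho_c} = \norm{\mu - \rho + 2\rho_c}$. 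Since $w_0 \in W^1(\mu) \subseteq W^1$, Lemma \ref{lemmaD} then yields $\normspin{\mu} \leq \norm{\{\mu - w_0\rho + \rho_c\} + \rho_c} = \norm{\mu + 2\rho_c - \rho}$.

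For the reverse inequality I would invoke exactly the bound \eqref{RoughSpinIneq} (combining Lemmas \ref{lemmaD} and \ref{DominantNorm}) together with \eqref{ineq-W2} (an instance of Lemma \ref{LammaForLambdaAndRho}): for every $w \in W^1$ one has $\norm{\{\mu - w\rho + \rho_c\} + \rho_c} \geq \norm{\mu - w\rho + 2\rho_c} \geq \norm{\mu - \rho + 2\rho_c}$, whence $\normspin{\mu} \geq \norm{\mu + 2\rho_c - \rho}$. Combining the two bounds gives the claimed equality.

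I do not expect any genuine obstacle: the statement is literally the case "(b) fails" of Theorem \ref{MainThm}, and every ingredient has already been assembled in its proof. The only point that deserves a moment of care is that $w_0$, although a priori merely an element of $W(\fg,\ft)$, in fact belongs to $W^1$ — this is what licenses applying Lemma \ref{lemmaD} to the term indexed by $w_0$ — and this follows from $\mu + 2\rho_c \in C(\fk)$, exactly as observed at the start of the proof of Theorem \ref{MainThm}.
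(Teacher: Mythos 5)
Your proposal is correct and is essentially the paper's own argument: the corollary is recorded by the authors precisely as the ``Assume (b) does not hold'' portion of the proof of Theorem \ref{MainThm}, where the same $w_0$ gives the upper bound via Lemmas \ref{lemmaD} and \ref{LammaForLambdaAndRho}, and the lower bound is exactly \eqref{RoughSpinIneq} combined with \eqref{ineq-W2}. Your extra remark that $W^1(\mu)\subseteq W^1$ (so Lemma \ref{lemmaD} applies to the term indexed by $w_0$) matches the observation made at the start of that proof.
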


\section{Application to tempered Dirac series}

We call an irreducible tempered representations with non-zero Dirac cohomology \emph{a tempered Dirac series}.
Combining Theorems \ref{MainThm} and \ref{baseThm}, we have the following.

\begin{thm}\label{thmApplication}
	Let $X$ be a tempered $(\fg,K)$-module with real infinitesimal character. Let $\pi$ be the unique lowest $K$-type of $X$ which has a highest weight $\mu$. Then $H_D(X)=0$ if and only if
\begin{itemize}
\item[\emph{(a)}] $\mu+2\rho_c$ is irregular for $\Delta(\fg, \ft)$; or
\item[\emph{(b)}] $\mu-w\rho+\rho_c$ is not dominant for $\Delta^+(\fk, \ft)$ for any $w\in W^1(\mu)$.
\end{itemize}
\end{thm}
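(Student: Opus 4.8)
The plan is to obtain Theorem~\ref{thmApplication} as a direct translation of Theorem~\ref{baseThm} through the criterion of Theorem~\ref{MainThm}. By Theorem~\ref{backgroundthm}, the tempered module $X$ with real infinitesimal character has a unique lowest $K$-type $\pi$, occurring with multiplicity one; fix a highest weight $\mu$ of $\pi$. By Theorem~\ref{baseThm}, $H_D(X)\neq 0$ if and only if $\normspin{\pi}=\normlambda{\pi}$, so $H_D(X)=0$ if and only if the inequality \eqref{the-ineq} is strict on $\pi$. Thus it suffices to invoke Theorem~\ref{MainThm} and check that the conditions (a) and (b) recorded there are verbatim the conditions (a) and (b) in the present statement.

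Condition (a) is literally identical in both theorems, so nothing is needed there. For condition (b), I would note that Theorem~\ref{MainThm}(b) reads ``$\mu-w\rho+\rho_c\notin C(\fk)$ for all $w\in W(\fg,\ft)$ with $\mu+2\rho_c\in wC(\fg)$,'' while the present (b) reads ``$\mu-w\rho+\rho_c$ is not dominant for $\Delta^+(\fk,\ft)$ for any $w\in W^1(\mu)$.'' These coincide once one recalls the notation fixed in the proof of Theorem~\ref{MainThm}: $W^1(\mu)$ is defined there as exactly $\{w\in W(\fg,\ft)\mid \mu+2\rho_c\in wC(\fg)\}$, and a weight lies in the closed chamber $C(\fk)$ precisely when it is dominant for $\Delta^+(\fk,\ft)$. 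So the only substantive point is to make the (already stated) identification of $W^1(\mu)$ with that set explicit in the proof, and to recall that $\Delta^+(\fg,\ft)$ is chosen here, as in Theorem~\ref{MainThm}, so that $\mu+2\rho_c$ is dominant (equivalently $e\in W^1(\mu)$).

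Concretely the proof would run: apply Theorem~\ref{MainThm} to $\pi$ with the chosen $\Delta^+(\fg,\ft)$ to conclude that \eqref{the-ineq} is strict on $\pi$ iff (a) or (b) of that theorem holds; then use Theorem~\ref{baseThm} to equate strictness of \eqref{the-ineq} on $\pi$ with the vanishing $H_D(X)=0$; finally rewrite condition (b) using $W^1(\mu)=\{w\in W(\fg,\ft)\mid \mu+2\rho_c\in wC(\fg)\}$ and the fact that membership in $C(\fk)$ is the same as $\Delta^+(\fk,\ft)$-dominance. One small bookkeeping remark worth including: strictly speaking $W^1(\mu)$ may depend on the choice of $\Delta^+(\fg,\ft)$, but since Theorem~\ref{MainThm} already shows the resulting condition is equivalent to the choice-independent strictness of \eqref{the-ineq}, the formulation of (b) here is unambiguous.

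There is no real obstacle; the ``hard part,'' such as it is, is purely notational — making sure the reader sees that $W^1(\mu)$ in the statement of Theorem~\ref{thmApplication} is the set introduced inside the proof of Theorem~\ref{MainThm}, and that ``not dominant for $\Delta^+(\fk,\ft)$'' is the same as ``$\notin C(\fk)$.'' All the analytic content (the sandwich \eqref{ineqs}, the role of the Pythagorean theorem, Lemma~\ref{LammaForLambdaAndRho}, and Lemma~\ref{DominantNorm}) has already been carried out in the proof of Theorem~\ref{MainThm} and in \cite{DD}, so the present proof is a two-line composition of those two theorems.
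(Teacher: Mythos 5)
Your proposal is correct and is exactly the paper's argument: the theorem is obtained by combining Theorem~\ref{baseThm} (equating $H_D(X)\neq 0$ with equality in \eqref{the-ineq} on the unique lowest $K$-type) with the strictness criterion of Theorem~\ref{MainThm}, the only remaining work being the notational identification of $W^1(\mu)$ and of ``dominant for $\Delta^+(\fk,\ft)$'' with membership in $C(\fk)$. Your bookkeeping remark about the choice of $\Delta^+(\fg,\ft)$ is a reasonable extra clarification but changes nothing of substance.
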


\begin{example}\label{exam-W1=e}
In the special case that $W^1=\{e\}$, which is met for complex Lie groups, $SL(2n+1, \bbR)$, $SL(n, \bbH)$ and the linear $E_{6(-26)}$, we always have that $\mu+2\rho_c$ is regular for $\Delta(\fg, \ft)$.  Thus $H_D(X)=0$ if and only if $\mu-\rho+\rho_c$ is dominant for $\Delta^+(\fk, \ft)$.
\end{example}

When $\#W^1>1$, pick up two distinct elements $w_1, w_2$ from $W^1$ such that $w_1 C(\fg) \cap w_2 C(\fg)$ is a codimension one facet of $w_1 C(\fg)$. Then condition (a) holds for any $\mu$ such that $\mu+2\rho_c\in w_1 C(\fg) \cap w_2 C(\fg)$. This suggests that the tempered Dirac series of $G$ should be divided into $\#W^1$ parts by those irreducible tempered $X$ such that $H_D(X)$ vanishes.

From now on, we shall use a circle to stand for a $K$-type, and paint it if and only if \eqref{the-ineq} is an equality. Let us see some concrete examples.

\begin{example}\label{exam-SL2R}
Consider ${\rm SL}(2, \bbR)$, where $\Delta(\fg, \ft)=\Delta(\fs, \ft)=\{ \pm 2\}$. Then $\#W^1=2$ and $C(\fg)\cap s C(\fg)=\{0\}$, where $s$ is the non-trivial element in $W^1$. Condition (b) does not take effect here since $\Delta(\fk, \ft)$ is empty. Thus $\mu=0$ is the unique $K$-type such that $\normspin{\mu}>\normlambda{\mu}$, and the tempered Dirac series of $SL(2, \bbR)$  are separated into two parts. See Figure \ref{fig-sl2r}.
\end{example}

\begin{figure}
	\begin{tikzpicture}
\foreach \x in {-4,-3,-2,-1,1,2,3,4}
	\fill (\x,0) circle (0.1);
	
\draw (0,0) circle (0.1);

\node[below] at (-4,0){-4};
\node[below] at (0,0){0};
\node[below] at (4,0){4};
	\end{tikzpicture}
\caption{Some $K$-types of $SL(2, \re)$}\label{fig-sl2r}
\end{figure}

\begin{figure}
	\begin{tikzpicture}
\foreach \x in {2,3,4}
	\fill (\x,\x) circle (0.1);

\fill (4,3) circle (0.1);
\fill (3,2) circle (0.1);
\fill (4,2) circle (0.1);

\foreach \x in {1,2,3,4}
	\draw (\x,1) circle (0.1);

\foreach \x in {2,3,4}
\fill (\x,0) circle (0.1);

\foreach \x in {0,1}
\draw (\x,0) circle (0.1);

\foreach \x in {1,2,3}
	\draw (\x,-\x) circle (0.1);

\foreach \x in {-1,0}
	\draw (\x,-1) circle (0.1);
	
\foreach \x in {2,3,4}
	\fill (\x,-1) circle (0.1);

\foreach \x in {-2,0,1,3,4}
	\fill (\x,-2) circle (0.1);

\foreach \x in {-3,-2,0,1,2,4}
	\fill (\x,-3) circle (0.1);

\foreach \y in {-2,-3}
	\draw (-1,\y) circle (0.1);

\foreach \x in {-4,-3,-2,0,1,2,3}
	\fill (\x,-4) circle (0.1);

\foreach \x in {-1,4}
	\draw (\x,-4) circle (0.1);

\node[below] at (0,0) {(0,0)};
\node[below] at (4,4) {(4,4)};
\node[below] at (-4,-4) {(-4,-4)};
\node[below] at (4,-4) {(4,-4)};

	\end{tikzpicture}    
	\caption{Some $K$-types of $Sp(4, \re)$}\label{fig-sp4R}
\end{figure}

\begin{example}\rm
Consider $G=Sp(4,\re)$. Let $K=U(2)$ and $T=U(1)\times U(1)$. Thus $\fk$ has a one-dimensional center. Fix
$$
\Delta^+(\fk,\ft)=\{(1,-1)\},\quad \Delta^+(\fg,\ft)=\{(1,-1),(2,0),(0,2),(1,1)\}.
$$
The corresponding simple roots are $\alpha_1=(1,-1)=2\rho_c$, and $\alpha_2=(0,2)$.  The highest weight of a $K$-type is represented by a pair of integers $(x, y)$ such that $x\geq y$.

Condition (a) of says that the $K$-types on the three lines $y=1$, $x=-1$ and $y=-x$ should \emph{not} be painted. These lines intersect at the point $(-1,1)$, which is  $-2\rho_c$. Condition (b) further says that $(1,0)$ and $(0,-1)$ should \emph{not} be painted.  Now Figure \ref{fig-sp4R} suggests that the tempered Dirac series of $Sp(4, \re)$ are separated into four parts.
\end{example}

\begin{example}\label{exam-G22}
Let $G$ be $G_{2(2)}$, the linear split $G_2$, which is centerless, connected, but not simply connected. We adopt the simple roots of $\Delta^+(\fg,\ft)$ and $\Delta^+(\fk,\ft)$ as in Knapp \cite{Kn}. Let $\alpha_1$ be the short simple root and $\alpha_2$ be the long one. In this case, $\Delta(\fg,\ft)$ is of type ${\rm G_2}$, while $\Delta(\fk,\ft)$ is of type ${\rm A_1\times A_1}$. We fix $\Delta^+(\fk,\ft)=\{\gamma_1, \gamma_2\}$, where $\gamma_1:=\alpha_1$ and $\gamma_2:=3\alpha_1+2\alpha_2$. Let $\omega_1,\omega_2$ be the fundamental weights for $\Delta(\fk,\ft)$ such that
$(\omega_i,\alpha_j^\vee)=\delta_{ij}.$ The $K$-types are parameterized via the highest weight theorem by $[a,b]:=a\omega_1+b\omega_2,~a,b\in \mathbb Z_{\geqslant 0}$ such that $a+b$ is even.

We show some of the $K$-types in Figure \ref{FigureG22}, where the $a$-coordinates of the bottom line are $0, 2, 4, 6, 8, \dots$, and so are the $b$-coordinates of the left-most column.

Now condition (a) says that $K$-types on the two lines $a=b$ and $a=3b+4$ should \emph{not} be painted.  These two lines intersect at $[-2,-2]=-2\rho_c$. From Figure \ref{FigureG22}, one sees that the tempered Dirac series are divided into three parts by the two lines. Condition (b)  further says that $[2,0]$ should \emph{not} be painted.

To sum up, we have recovered Corollary 8.4 of \cite{DDY}.
\end{example}

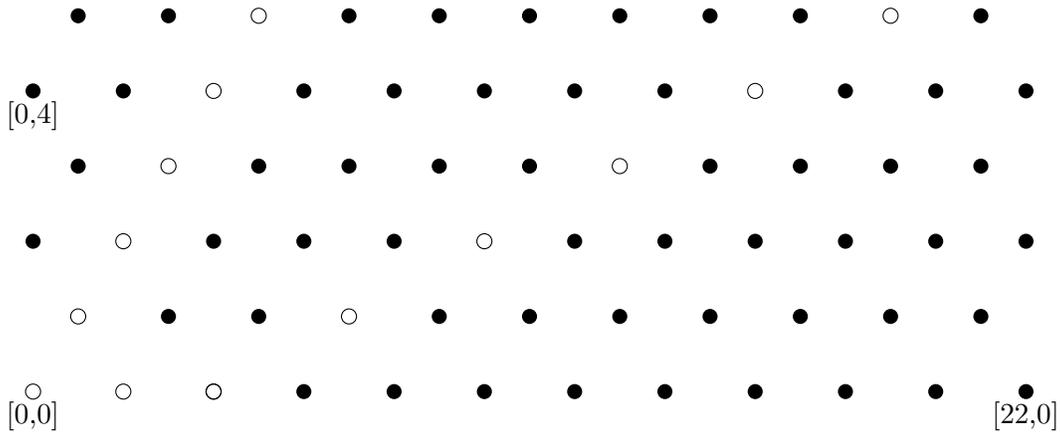
\begin{figure}
	\begin{tikzpicture}
		\foreach \x in {6,8,...,22}
		\fill (\x*0.6,0) circle (0.1);
		\foreach \x in {3,5,9,11,...,21}
		\fill (\x*0.6,1) circle (0.1);
		\foreach \x in {0,4,6,8,12,14,...,22}
		\fill (\x*0.6,2) circle (0.1);
		\foreach \x in {1,5,7,9,11,15,17,...,21}
		\fill (\x*0.6,3) circle (0.1);
		%
		\foreach \x in {0,2,6,8,...,14,18,20,22}
		\fill (\x*0.6,4) circle (0.1);
		\foreach \x in {1,3,7,9,...,17,21}
		\fill (\x*0.6,5) circle (0.1);
		\foreach \x in {0,2,4}
		\draw (\x*0.6,0) circle (0.1);
		\foreach \y in {1,2,3,...,5}
		\draw (\y*0.6,\y) circle (0.1);
		\foreach \y in {1,2,3,...,6}
		\draw (\y*1.8+0.6,\y-1) circle (0.1);
		
		\node[below] at (0,0){[0,0]};
		\node[below] at (0,4){[0,4]};
		\node[below] at (13.2,0){[22,0]};
	\end{tikzpicture}
\caption{Some $K$-types of the linear split $G_2$}\label{FigureG22}
\end{figure}

\centerline{\scshape Funding}
Dong is supported by the National Natural Science Foundation of China (grant 12171344).

\end{document}